\documentclass[11pt]{article}

\usepackage[latin1]{inputenc}
\usepackage{t1enc}
\usepackage{amsmath, amssymb, amsfonts, amsthm, amsopn,epsfig}
\usepackage[none]{hyphenat}
\usepackage{tikz}
\usepackage{float}
\usepackage{graphicx}
\usepackage{caption}
\usepackage[toc,page]{appendix}
\usepackage{epstopdf}
\usepackage{etoolbox}
\usepackage[colorlinks=true]{hyperref}
\usepackage{dsfont}
\hypersetup{
	colorlinks=true,
	linkcolor=blue,
	filecolor=magenta,      
	urlcolor=cyan,
	citecolor=blue
}
\usepackage{cleveref}
\usepackage[top=24mm, bottom=23mm, left=25mm, right=25mm]{geometry}

\theoremstyle{plain}
\newtheorem{theorem}{Theorem}[section]

\newtheorem{lemma}[theorem]{Lemma}

\theoremstyle{definition}

\DeclareMathOperator{\perm}{perm}

\title{The number of symmetric chain decompositions}
\author{Istv\'an Tomon\thanks{Ume\r{a} University, \emph{e-mail}: \textbf{istvantomon@gmail.com}, Research supported in part by the Swedish Research Council grant VR 2023-03375.}}
\date{}

\begin{document}
	\sloppy 
	
	\maketitle

 \begin{abstract}
     We prove that the number of symmetric chain decompositions of the Boolean lattice $2^{[n]}$ is $$\left(\frac{n}{2e}+o(n)\right)^{2^n}.$$
     Furthermore, the number of symmetric chain decompositions of the hypergrid $[t]^n$ is 
     $$n^{(1-o_n(1))\cdot t^n}.$$
 \end{abstract}

\section{Introduction}
The \emph{Boolean lattice} $2^{[n]}$ is the power set of $[n]=\{1,\dots,n\}$ ordered by inclusion. Decompositions of the Boolean lattice into chains are the subject of extensive study, and so called symmetric chain decompositions (SCD) are of particular interest. A \emph{chain} in $2^{[n]}$ is a sequence of sets $C_0\subset \dots \subset C_k$, and a chain is \emph{symmetric} if $|C_0|+|C_k|=n$ and $|C_i|=|C_0|+i$ for $i\in [k]$. In other words, a chain is symmetric if the sizes of the sets in the chain form an interval with midpoint $n/2$. A decomposition of $2^{[n]}$ into symmetric chains was first constructed by de Brujin, Tengbergen, and Kruyswijk \cite{BTK} based on a recursive argument. Greene and Kleitman \cite{GK} used a parentheses matching approach to construct such decompositions explicitly, while further proofs of existence \cite{Griggs} are also available. Shearer and Kleitman \cite{SK} show that there exist two edge disjoint SCDs in $2^{[n]}$, while Gregor, J\"ager, M\"utze, Sawada, and Wille \cite{GJMSW} find explicit constructions of four edge-disjoint SCDs if $n\geq 12$.

The importance of symmetric chain decompositions comes from their profound applications. It is easy to show that every decomposition of $2^{[n]}$ into symmetric chains contains exactly $\binom{n}{\lfloor n/2\rfloor}$ chains, which is the minimum number of chains in any chain decomposition, giving an alternative proof of Sperner's theorem \cite{Sperner}. Strong versions of Sperner's theorem easily follow from the existence of SCDs as well \cite{Katona}. Furthermore, constructions of SCDs played an important role in the resolution of the vector Littlewood-Offord problem by Kleitman \cite{kleitman65}, and the existence of so called symmetric Venn diagrams also relies on SCDs \cite{GKS}. For more recent applications, see \cite{BTW,GJMSW}.

However, not much is known about the total number of different SCDs in $2^{[n]}$. Are there many different SCDs or do all SCDs arise from some of the explicit constructions mentioned above? By observing that the bipartite comparability graph between the consecutive levels of $2^{[n]}$ has maximum degree at most $n$, it is easy to argue that the number of SCDs is less than $n^{2^n}$. On the other hand, the author of this paper \cite{Tomon} established the lower bound $n^{\Omega(2^n/\sqrt{n})}$. More precisely, the proof of \cite{Tomon} implies that if $E\subset 2^{[n]}$ is the set of endpoints of the chains in an SCD, then $E$ can already take $n^{\Omega(2^n/\sqrt{n})}$ different values. As $|E|= 2\binom{n}{\lfloor n/2\rfloor}$ if $n$ is odd, and $|E|=\binom{n}{ n/2}+\binom{n}{ n/2-1}$ if $n$ is even, we also have the matching upper bound $\binom{2^n}{|E|}=n^{O(2^n/\sqrt{n})}$ for this counting problem. The main result of our paper gives a surprisingly sharp estimate for the number of SCDs, showing that the trivial upper bound $n^{2^{n}}$ is not far from the truth. This also shows that SCDs are much more common than one might initially believe.

\begin{theorem}\label{thm:Boolean}
The number of symmetric chain decompositions of $2^{[n]}$ is 
$$\left(\frac{n}{2e}+o(n)\right)^{2^n}.$$
\end{theorem}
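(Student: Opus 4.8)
The plan is to prove matching upper and lower bounds, both governed by the number of matchings between consecutive levels of $2^{[n]}$. The starting point is that an SCD is equivalent to a family of matchings $(M_k)_{0\le k<n}$, where $M_k$ lies in the bipartite comparability graph $G_k$ between level $k$ and level $k+1$, and consists exactly of the chain-edges joining these two levels. Since every chain passing through a level $k<n/2$ has bottom $b\le k$ and top $n-b>k+1$, it continues upward; hence $M_k$ saturates the smaller side (level $k$ for $k<n/2$, and by complementation level $k+1$ for $k\ge n/2$), and the whole SCD is recovered as the set of maximal paths in $\bigcup_k M_k$. This gives an injection from SCDs into tuples of saturating matchings, so the number of SCDs is at most $\prod_k \mu(G_k)$, where $\mu(G_k)$ counts the matchings saturating the small side of $G_k$.

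For the upper bound I would estimate each $\mu(G_k)$ by Br\'egman's theorem. A level-$k$ set has up-degree $n-k$ and a level-$(k+1)$ set has down-degree $k+1$, so Br\'egman gives $\mu(G_k)\le \prod_v (d(v)!)^{1/d(v)}$ over the small side, with $(d!)^{1/d}=(1+o(1))d/e$. For $n=2m$ the resulting bound factorises, after reindexing by $i=n-k$ and $i=k+1$, as $\left[\prod_{i=m+1}^{n}(i/e)^{\binom{n}{i}}\right]^{2}$. The sum $\sum_{i>m}\binom{n}{i}\log(i/e)$ is dominated by indices within $O(\sqrt n)$ of $n/2$, where $\log(i/e)=\log\frac{n}{2e}+o(1)$; since $\sum_{i>m}\binom{n}{i}=(1+o(1))2^{n-1}$, each bracket equals $\left(\frac{n}{2e}+o(n)\right)^{2^{n-1}}$, yielding the claimed $\left(\frac{n}{2e}+o(n)\right)^{2^{n}}$.

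For the lower bound I would build SCDs from the middle outward. Starting from level $m$ (all singleton chains), at step $j$ I adjoin levels $m-j$ and $m+j$: every \emph{active} chain (one still being extended) reaches level $m-j+1$ at its bottom and level $m+j-1$ at its top, and I choose a matching $M_{m-j}$ saturating level $m-j$ into level $m-j+1$ and a matching $M'$ saturating level $m+j$ into level $m+j-1$. Symmetry of the chains imposes exactly one constraint: the chains extended downward must coincide with those extended upward, i.e. $\mathrm{im}(M')=\sigma(\mathrm{im}(M_{m-j}))$, where $\sigma$ is the bijection between the current bottoms and tops induced by the active chains; the remaining active chains are closed off as genuine symmetric chains. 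Using Schrijver's lower bound on the number of matchings of a near-regular bipartite graph (the counterpart of Br\'egman), and the fact that a spread-out set $\sigma(\mathrm{im}(M_{m-j}))$ induces a near-regular degree-$(m-j+1)$ subgraph with level $m+j$, step $j$ contributes roughly $\big(\tfrac{m+j}{e}\big)^{\binom{2m}{m-j}}\big(\tfrac{m-j+1}{e}\big)^{\binom{2m}{m-j}}$. The arithmetic again collapses to the right answer, because $\sqrt{(n-i)(i+1)}/e=(1+o(1))\frac{n}{2e}$ near $i=n/2$, so the product over all steps is $\left(\frac{n}{2e}-o(n)\right)^{2^{n}}$.

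The main obstacle is precisely this consistency constraint: $M_{m-j}$ and $M'$ are not independent, and one must show that $\sigma(\mathrm{im}(M_{m-j}))$ is ``spread out'' — inducing a near-regular, hence richly matchable, subgraph — for all but a negligible (suitably weighted) fraction of the choices of $M_{m-j}$. Since $\sigma$ is not fixed in advance but is assembled through the entire construction, the crux is to propagate a pseudorandomness invariant: images of random saturating matchings are quasirandom subsets, and the strong expansion (normalised matching) properties of the comparability graphs $G_k$ should be used to guarantee that $\sigma$ preserves this quasirandomness level by level. Treating the odd-$n$ case (two central levels $m,m+1$, so the onion starts from a perfect matching between them) and absorbing the $(1+o(1))$ and subset-choice factors into the $o(n)$ error are routine once this pseudorandomness is in hand.
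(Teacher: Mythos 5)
Your overall architecture (an onion-style middle-out construction for the lower bound, Br\'egman for the upper bound, and the same asymptotic arithmetic) parallels the paper, but each half has a problem: one repairable, one a genuine missing idea. In the upper bound, the inequality $\mu(G_k)\le \prod_{v}(d(v)!)^{1/d(v)}$ over the small side is false as stated: Theorem \ref{lemma:matching_upper} bounds \emph{perfect} matchings, and for matchings merely saturating the smaller side it already fails for $K_{1,2}$ (two matchings versus $\sqrt{2}$), and more generally for $K_{a,b}$ with $a<b$, where the count $b!/(b-a)!$ strictly exceeds $(b!)^{a/b}$. You can repair this by adding $|Y_k|-|X_k|$ dummy vertices joined to all of $Y_k$ and applying Br\'egman to the balanced auxiliary graph; the correction factor $(|Y_k|!)^{(|Y_k|-|X_k|)/|Y_k|}/(|Y_k|-|X_k|)!$ has logarithm roughly $(|Y_k|-|X_k|)\log\bigl(|Y_k|/(|Y_k|-|X_k|)\bigr)$, which summed over all levels is $o(2^n)$ (the differences $|Y_k|-|X_k|$ telescope to $O(2^n/\sqrt{n})$ near the middle, and the far levels have negligible total size), so your constant survives. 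The paper never meets this issue because its Br\'egman application is to perfect matchings of a balanced auxiliary graph.

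The genuine gap is in the lower bound, and it is exactly what you flag as the crux: the consistency constraint $\mathrm{im}(M')=\sigma(\mathrm{im}(M_{m-j}))$, where $\sigma$ is assembled adversarially from all earlier choices. Your proposed fix --- propagating a quasirandomness invariant so that $\sigma(\mathrm{im}(M_{m-j}))$ always induces a near-regular subgraph with $L_{m+j}$ --- is not carried out and is far from routine; nothing in the outline prevents earlier matchings from conspiring to make this image set badly clustered. The paper dissolves the difficulty with a trick of Griggs that your outline lacks: identify $L_{m-j+1}$ and $L_{m+j-1}$ \emph{along} $\sigma$, so that extending the partial SCD becomes counting SCDs of a three-level poset $P_j$ with outer levels $X=L_{m-j}$, $Z=L_{m+j}$ and glued middle level $Y$ (three-element chains of $P_j$ extend a chain on both sides at once, singleton chains close it off). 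The key point is that $P_j$ is regular \emph{no matter what $\sigma$ is}: every outer vertex has exactly $m+j$ middle neighbours and every middle vertex has exactly $m-j+1$ neighbours in each outer level, because the consecutive-level comparability graphs of $2^{[n]}$ are bi-regular. Hence the number of extensions is bounded uniformly in $\sigma$, and the total count simply multiplies across steps with no pseudorandomness needed. Quantitatively (Lemma \ref{lemma:3level}), one takes two copies $Y_1,Y_2$ of $Y$, joins $X$ to $Y_2$ and $Z$ to $Y_1$ by edges of weight $1/(m+j)$, joins each $y_1\in Y_1$ to its copy $y_2\in Y_2$ by weight $1-a/b$ with $a=|L_{m+j}|$, $b=|L_{m+j-1}|$; the bi-adjacency matrix is doubly stochastic, perfect matchings of this graph biject with SCDs of $P_j$, each has weight at most $(m+j)^{-2a}$, and Falikman's bound $\perm(A)\ge e^{-(a+b)}$ (Theorem \ref{lemma:perm}) yields at least $\left(\frac{m+j}{e}\right)^{2a}e^{-2(b-a)}$ extensions, uniformly in $\sigma$. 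Without this uniformity --- or a genuinely worked-out substitute for your quasirandomness propagation --- your lower bound does not go through.
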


A natural generalization of the Boolean lattice is the \emph{hypergrid}, sometimes referred to as the \emph{divisor lattice}. The hypergrid is the set $[t]^n$ endowed with the coordinate-wise ordering $\prec$, that is, for $(x_1,\dots,x_n),(y_1,\dots,y_n)\in [t]^n$, we have $(x_1,\dots,x_n)\preceq (y_1,\dots,y_n)$ if $x_i\leq y_i$ for every $i\in [n]$. This is indeed a generalization, as $[2]^n$ is isomorphic to the Boolean lattice $2^{[n]}$. Many natural properties of the Boolean lattice carry over to the hypergrid as well, however, oftentimes the proof of these properties is much more involved. The reason for this is that while the bipartite graph between consecutive levels of the Boolean lattice is bi-regular, this is no longer the case for the hypergrid if $t\geq 3$. One such property is that $[t]^n$ is Sperner, that is, the size of the largest antichain is equal to  the size of a largest level. Nevertheless, as proved by de Brujin, Tengbergen, and Kruyswijk \cite{BTK}, the hypergrid $[t]^n$ also has a symmetric chain decomposition (see the Preliminaries for formal definitions), which immediately implies that $[t]^n$ satisfies even strong notions of the Sperner property. In \cite{BTK}, an explicit SCD is constructed in a clever recursive manner. This raises the question whether there exist other, significantly different constructions of SCDs, and in particular, how many are there. Similarly as before, it is easy to argue that the number of SCDs of $[t]^n$ is at most $n^{t^n}$, by observing that the bipartite graph between the levels has maximum degree at most $n$. Our second main result shows that this trivial upper bound is not too far from the actual number, so SCDs of $[t]^n$ are fairly abundant.

\begin{theorem}\label{thm:hypergrid}
Let $\varepsilon>0$. If $n>n_0(\varepsilon)$ is sufficiently large, then for every $t\geq 2$, the number of symmetric chain decompositions of $[t]^{n}$ is at least
$$n^{(1-\varepsilon)t^n}.$$
\end{theorem}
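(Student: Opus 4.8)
The plan is to fix a single, well-chosen base SCD and count the symmetric chain decompositions obtainable from it by local ``tail swaps'' that preserve the lengths of the chains. Write $N=n(t-1)$ for the top rank, $L_j$ for the $j$-th level, $W_j=|L_j|$, and group the chains of the base into \emph{classes}, where class $a$ consists of all chains spanning the symmetric rank interval $[a,N-a]$; there are $c_a=W_a-W_{a-1}$ such chains (for $a\le N/2$), each meeting every level $j\in[a,N-a]$ in exactly one vertex. The key observation is: if I fix the vertex set $V_a$ covered by the class-$a$ chains, then \emph{any} choice, for each junction $j\in[a,N-a-1]$, of a perfect matching in the bipartite cover-graph $B_{a,j}$ between $V_a\cap L_j$ and $V_a\cap L_{j+1}$ reassembles $V_a$ into $c_a$ vertex-disjoint level-complete chains, each again spanning $[a,N-a]$ and hence symmetric. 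Different matchings give different decompositions, and they can be chosen independently across classes, since the $V_a$ partition $[t]^n$. Thus
\[
\#\mathrm{SCD}([t]^n)\ \ge\ \prod_{a}\ \prod_{j=a}^{N-a-1}\operatorname{perm}(B_{a,j}),
\]
where $\operatorname{perm}$ denotes the number of perfect matchings.

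Next I would lower-bound each factor. If $B_{a,j}$ contains a $d$-regular spanning subgraph, then scaling its biadjacency matrix to a doubly stochastic one and invoking the van der Waerden--Falikman--Egorychev bound gives $\operatorname{perm}(B_{a,j})\ge (d/e)^{c_a}$. Feeding this in and taking logarithms base $n$, the exponent becomes $\sum_{a,j} c_a\bigl(1+\log_n(d_{a,j}/e)\bigr)$, so I need $d_{a,j}=n^{1-o(1)}$ for classes and junctions accounting for $(1-\varepsilon)t^n$ of the total volume $\sum_a c_a(N-2a+1)=t^n$; on the remaining $\varepsilon t^n$ I simply use $\operatorname{perm}\ge 1$. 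Since the level sizes concentrate, a $(1-\varepsilon)$-fraction of the volume sits in levels within $O_\varepsilon(\sqrt n)$ of the middle, where a typical vertex has $\Theta(n)$ up-covers; restricting to those junctions, and recalling that the number of chains $W_{\lfloor N/2\rfloor}=o(t^n)$ is negligible, it suffices to guarantee $d_{a,j}=\Theta(n)$ there. Because $t$ is fixed, every constant factor ($1/e$, $(t-1)/t$, etc.) is absorbed into $n^{o(1)}$, which is exactly why the crude target $n^{(1-\varepsilon)t^n}$, rather than a sharp constant, is reachable for all $t$; as a sanity check, with $t=2$ and degree $\sim n/2$ this recovers the $(n/2e)^{2^n}$ shape of Theorem~\ref{thm:Boolean}.

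The hard part will be engineering the base SCD so that the graphs $B_{a,j}$ are genuinely dense. A ``quasirandom'' base, where a vertex's up-covers are spread across all classes in proportion to their sizes, only yields $d_{a,j}\approx (c_a/W_j)\,n$, and near the middle $c_a/W_j=\Theta(n^{-1/2})$, which would cap the bound at a useless $n^{t^n/2}$. I therefore need the classes to be \emph{cover-local}: each vertex must keep $\Theta(n)$ of its up-covers inside its own class. The way to achieve this is to build the base so that same-span chains assemble into structured, subcube-like sets --- fixing a bounded number of coordinates and letting the rest vary, so that raising any of the $\Theta(n)$ free coordinates stays within the class. Concretely I would use a suitable coordinate-blocked variant of the recursive de Bruijn--Tengbergen--Kruyswijk construction and verify that its classes have this product structure, making each $B_{a,j}$ contain a $\Theta(n)$-regular spanning subgraph. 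Establishing this regularity for classes covering a $(1-\varepsilon)$-fraction of the volume --- equivalently, controlling how the recursive construction distributes covers among equal-length chains --- is the principal technical obstacle; everything else is soft counting together with the permanent inequality.
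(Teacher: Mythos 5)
Your counting framework in the first paragraph is correct: fixing a base SCD and the vertex sets $V_a$ of its classes, the perfect matchings at the junctions reassemble into symmetric chains, the map from tuples of matchings to SCDs is injective, and the product bound $\#\mathrm{SCD}\ge\prod_{a}\prod_{j}\perm(B_{a,j})$ holds. But the proposal has a genuine gap, and you have located it yourself: everything hinges on the existence of a base SCD that is ``cover-local,'' i.e.\ whose graphs $B_{a,j}$ support an (essentially) $n^{1-o(1)}$-regular structure on a $(1-\varepsilon)$-fraction of the volume, and this is never established. This is not a deferred routine verification --- it is the entire difficulty of the theorem, and there is concrete evidence it is hard or even false. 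The standard constructions provably fail: for the Greene--Kleitman bracket construction on $2^{[n]}$, an up-cover $S\cup\{i\}$ of $S$ stays in the class of $S$ exactly when the prefix of the bracket walk before position $i$ has depth one less than the full depth; for a typical $S$ near the middle this window has length $\Theta(\sqrt n)$, so within-class degrees are $\Theta(\sqrt n)$, capping your method at the useless $n^{t^n/2}$ --- precisely the failure mode your quasirandom heuristic predicts. Worse, your proposed remedy (classes with subcube-like product structure) conflicts with a hard constraint of the framework: every class $V_a$ must meet each level of $[a,N-a]$ in exactly $c_a$ elements (a flat level profile), while coordinate-blocked product sets have binomial level profiles; so the object you need cannot be ``a bounded number of fixed coordinates with the rest free,'' and it is unclear it exists at all.

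The paper's proof is structured so that this object is never needed. It does not fix a base SCD or the classes: it builds the decomposition from the middle outward, at step $s$ gluing the two inner levels $L_{m-s+1}$ and $L_{m+s-1}$ along the bijection given by the current chain endpoints into the middle level of a three-level poset $P_s$, and then counts \emph{all} symmetric chain decompositions of $P_s$ at once as perfect matchings of an auxiliary bipartite graph (a middle vertex matched to its own copy is a chain that terminates). Thus the partition into classes --- which chains stop where --- is part of what is being counted rather than an input, and the permanent bound is applied to the full bipartite graph between consecutive levels, not to within-class subgraphs. The density input that your approach lacks is supplied there by a property of the hypergrid itself, not of any particular SCD: the scaled normalized matching flow of Falgas-Ravry, R\"aty, and Tomon (Lemma \ref{lemma:regular_flow}), with all edge weights $O(\log n/n)$ near the middle, makes the glued three-level posets fractionally regular, after which Falikman's theorem gives $\perm(A)\ge e^{-(a+b)}$ and hence $n^{(1-o_n(1))|L_{m+s}|}$ extensions per step. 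In short: your reduction to permanents is fine, but the lemma you would need (a cover-local SCD) is an unproven, possibly false, and strictly different statement from the known regularity lemma the paper relies on.
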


We prove Theorems \ref{thm:Boolean} and \ref{thm:hypergrid} in Sections \ref{sect:1} and \ref{sect:2}, respectively, after introducing our notation.
\section{Preliminaries}

In this section, we recall a number of basic definitions related to partial orders. For a detailed overview of the subject, we refer the interested reader to \cite{book_Anderson}.

Let $P$ be a poset with partial ordering $\prec$. An element $y\in P$ \emph{covers} $x\in P$ if $x\prec y$ and there is no $z\in P$ such that $x\prec z \prec y$. The \emph{cover graph} of $P$ is the graph on vertex set $P$ in which two vertices are joined by an edge if one covers the other. The \emph{comparability graph} of $P$ is the graph on vertex set $P$ in which two vertices are joined by an edge if they are comparable by $\prec$. The poset $P$ is \emph{graded} if there exists a partition $L_0,\dots,L_m$ of $P$ such that $L_0$ is the set of minimal elements, and if $y\in L_i$ covers $x\in L_j$, then $i=j+1$. If $P$ is graded, the partition $L_0,\dots,L_m$ is unique, and $L_i$ is called a \emph{level} of $P$. In this case, if $x\in P$, the \emph{rank} of $x$ is the unique index $i$ such that $x\in L_i$, and we denote it by $r(x)$. A graded poset $P$ is \emph{rank-symmetric} if $|L_i|=|L_{m-i}|$ for $i=0,\dots,k$. A chain $x_0\prec \dots\prec x_{k}$ in $P$ is \emph{symmetric} if $r(x_0)+r(x_k)=m$ and $r(x_i)=r(x_0)+i$ for every $i=1,\dots,k$.

Clearly, the Boolean lattice $2^{[n]}$, and more generally the hypergrid $[t]^n$ is graded and rank-symmetric. In particular, $[t]^n$ has $n(t-1)+1$ levels, and if $(x_1,\dots,x_n)\in [t]^n$, then $r((x_1,\dots,x_n))=x_1+\dots+x_n-n$.

\section{Boolean lattice}\label{sect:1}
In this section, we prove Theorem \ref{thm:Boolean}. Our proof relies on some well known inequalities about the number of matchings in bipartite graphs.

A matrix is \emph{doubly stochastic} if every entry is non-negative, and the sum of entries in each row and column is 1. One of the main tools to prove our lower bounds  is a celebrated result of Falikman \cite{Falikman} on the permanents of doubly stochastic matrices, which confirmed a well known conjecture of van der Waerden. See also Egorychev \cite{Egorychev} and Schrijver \cite{Schrijver} for sharper versions.

\begin{theorem}[\cite{Falikman}]\label{lemma:perm}
Let $A$ be a doubly stochastic $n\times n$ matrix. Then
$$\perm(A)\geq \frac{n!}{n^n}>e^{-n}.$$
\end{theorem}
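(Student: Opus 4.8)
The plan is to prove the nontrivial lower bound $\perm(A)\ge n!/n^n$ by the \emph{capacity method} of Gurvits, and then deduce the stated strict inequality from the elementary estimate $n!>(n/e)^n$. The latter is immediate: since $e^{n}=\sum_{k\ge 0} n^{k}/k! > n^{n}/n!$, we get $n!>n^{n}e^{-n}$, and dividing by $n^n$ gives $n!/n^n>e^{-n}$. For the main inequality, I would associate to the doubly stochastic matrix $A=(a_{ij})$ the homogeneous degree-$n$ polynomial
$$p_A(x_1,\dots,x_n)=\prod_{i=1}^{n}\Big(\sum_{j=1}^{n}a_{ij}x_j\Big),$$
a product of linear forms with nonnegative coefficients. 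Two facts are then transparent: $\perm(A)$ is exactly the coefficient of the monomial $x_1x_2\cdots x_n$ in $p_A$; and $p_A$ is \emph{real stable}, since a product of linear forms with nonnegative coefficients cannot vanish when all variables lie in the open upper half-plane.

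The second ingredient is the capacity $\mathrm{cap}(p)=\inf_{x>0}p(x_1,\dots,x_n)/(x_1\cdots x_n)$, and I would show $\mathrm{cap}(p_A)=1$. Substituting $x_j=e^{y_j}$, the logarithm of the ratio becomes $\sum_i\log(\sum_j a_{ij}e^{y_j})-\sum_j y_j$, which is convex in $y$; its gradient vanishes at $y=0$ precisely because the column sums of $A$ equal $1$, so the infimum is attained at $x=\mathbf 1$ and equals $\prod_i(\sum_j a_{ij})=1$ by the row sums.

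The heart of the argument is Gurvits's inequality: for every real stable, homogeneous degree-$n$ polynomial $p$ in $n$ variables with nonnegative coefficients, the coefficient of $x_1\cdots x_n$ is at least $\mathrm{cap}(p)\cdot\prod_{k=1}^{n}\big((k-1)/k\big)^{k-1}$, and a short telescoping computation gives $\prod_{k=1}^{n}\big((k-1)/k\big)^{k-1}=n!/n^{n}$. Combined with $\mathrm{cap}(p_A)=1$ this yields $\perm(A)\ge n!/n^{n}$. I would establish Gurvits's inequality by induction on $n$, peeling off one variable at a time via the operator $p\mapsto(\partial_{x_n}p)|_{x_n=0}$, which preserves real stability and nonnegativity of coefficients. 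The per-step loss is governed by a univariate lemma: for a real-rooted $q(t)=\sum_{k=0}^{d}a_kt^k$ with nonnegative coefficients, $a_1\ge\big((d-1)/d\big)^{d-1}\inf_{t>0}q(t)/t$. Since a homogeneous polynomial in $k$ variables has degree at most $k$ in $x_n$ and $\big((d-1)/d\big)^{d-1}$ decreases in $d$, the step from $k$ to $k-1$ variables costs a factor $\big((k-1)/k\big)^{k-1}$, and the product over $k=2,\dots,n$ gives exactly $n!/n^n$.

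The main obstacle is the inductive capacity step, namely proving that the peeling operator degrades the capacity by at most the claimed factor while verifying that stability is maintained. Establishing the closure of real stable polynomials under differentiation and real specialization, together with the sharp univariate root-location estimate underlying the lemma, is where the genuine work lies; once these are in place the van der Waerden bound drops out of the induction. An alternative route, following Egorychev and Falikman, analyzes a permanent-minimizing doubly stochastic matrix and invokes the Alexandrov--Fenchel inequality for mixed permanents, but the capacity approach above is more self-contained.
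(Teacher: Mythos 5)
The paper does not prove this statement at all: it is Falikman's theorem (the resolution of the van der Waerden conjecture), quoted from the literature and used as a black box, so there is no internal proof to compare yours against. Your sketch is a correct outline of a genuinely different argument from the one the citation points to, namely Gurvits's capacity proof. All the key ingredients are stated accurately: $\perm(A)$ as the coefficient of $x_1\cdots x_n$ in $p_A=\prod_i\bigl(\sum_j a_{ij}x_j\bigr)$; real stability of a product of nonnegative linear forms; the computation $\mathrm{cap}(p_A)=1$, where the vanishing gradient at $y=0$ encodes the column sums and the value $1$ the row sums; the telescoping identity $\prod_{k=1}^{n}\bigl((k-1)/k\bigr)^{k-1}=n!/n^n$; and the reduction of the inductive peeling step to the univariate lemma $a_1\ge\bigl((d-1)/d\bigr)^{d-1}\inf_{t>0}q(t)/t$ for real-rooted $q$ with nonnegative coefficients, combined with the monotonicity of $\bigl((d-1)/d\bigr)^{d-1}$ in $d$. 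What you defer as ``the genuine work'' --- closure of real stability under $p\mapsto(\partial_{x_n}p)|_{x_n=0}$, real-rootedness of fiberwise restrictions, and the sharp univariate estimate --- is indeed the technical core, so your text is a proof plan rather than a complete proof; but it is the standard, correct plan (Gurvits, as exposited by Laurent and Schrijver). Compared with Falikman's and Egorychev's original route (analysis of a permanent-minimizing doubly stochastic matrix, with Alexandrov--Fenchel-type inequalities), the capacity method is more self-contained and generalizes further, at the price of developing the basic theory of stable polynomials. For the paper's purposes either is overkill: the result is needed only as an off-the-shelf tool, and in fact only the weaker consequence $\perm(A)>e^{-n}$ is ever invoked.
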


To prove our upper bounds, we apply a result of Br\'egman \cite{Bregman} on the number of matchings in bipartite graphs with given degree sequence.

\begin{theorem}[\cite{Bregman}]\label{lemma:matching_upper}
Let $G$ be a bipartite graph such that the degree sequence of  one of the vertex classes is $d_1,\dots,d_n$. Then the number of perfect matchings in $G$ is at most 
$$\prod_{i=1}^{n}(d_i!)^{1/d_i}.$$
\end{theorem}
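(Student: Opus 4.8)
The plan is to prove this bound by the entropy method, following the random-order argument of Radhakrishnan. Let $U=\{u_1,\dots,u_n\}$ and $W=\{w_1,\dots,w_n\}$ be the two vertex classes, where $u_i$ has degree $d_i$, and identify a perfect matching with a bijection $\sigma\colon U\to W$ satisfying $u_i w_{\sigma(i)}\in E(G)$ for every $i$. Write $M$ for the number of perfect matchings; we may assume $M\geq 1$. Choosing $\sigma$ uniformly at random among these matchings, the Shannon entropy satisfies $H(\sigma)=\log M$, so it suffices to establish the bound $H(\sigma)\leq \sum_{i=1}^n \frac{1}{d_i}\log(d_i!)$.

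First I would introduce an auxiliary uniformly random ordering $\pi$ of the rows, chosen independently of $\sigma$, and reveal the values $\sigma(i)$ in the order dictated by $\pi$. Since $\sigma$ and $\pi$ are independent, $H(\sigma)=H(\sigma\mid \pi)$, and the chain rule for entropy rewrites this as a sum, indexed by the rows $i$, of the conditional entropy of $\sigma(i)$ given $\pi$ together with all values $\sigma(j)$ for rows $j$ preceding $i$ in the order $\pi$. The elementary fact that the entropy of a variable is at most the logarithm of the size of its support then yields $H(\sigma)\leq \sum_{i=1}^n \mathbb{E}[\log N_i]$, where $N_i$ denotes the number of neighbours of $u_i$ that are still unmatched at the moment row $i$ is processed, and the expectation is taken over both $\sigma$ and $\pi$.

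The heart of the argument, and the step I expect to be the main obstacle, is the exact evaluation of $\mathbb{E}[\log N_i]$. I would fix the matching $\sigma$ and average over $\pi$ alone. The key observation is that the $d_i$ neighbours of $u_i$ are matched by $\sigma$ to a set $R_i$ of exactly $d_i$ rows, one of which is $i$ itself, and that a neighbour of $u_i$ remains available precisely when its $\sigma$-partner does not strictly precede $i$ in $\pi$. Hence $N_i$ equals the number of rows of $R_i$ not preceding $i$, that is, $d_i+1$ minus the rank of $i$ among the rows of $R_i$ in the order $\pi$. Because $\pi$ is uniform, this rank is uniform on $\{1,\dots,d_i\}$, so $N_i$ is uniform on $\{1,\dots,d_i\}$ and $\mathbb{E}_\pi[\log N_i]=\frac{1}{d_i}\sum_{\ell=1}^{d_i}\log \ell=\frac{1}{d_i}\log(d_i!)$, a value independent of $\sigma$.

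Substituting this identity into the entropy bound gives $\log M\leq \sum_{i=1}^n \frac{1}{d_i}\log(d_i!)=\log\prod_{i=1}^n (d_i!)^{1/d_i}$, and exponentiating completes the proof. The only genuinely delicate point is the combinatorial claim that, after averaging over the random order, $N_i$ becomes uniformly distributed on $\{1,\dots,d_i\}$; everything else, namely the reindexing of the chain rule from positions to rows and the support bound on conditional entropy, is routine once this distributional fact is in hand.
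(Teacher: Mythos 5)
The paper does not prove this statement at all: it is Br\'egman's theorem (the Minc conjecture), quoted as a black box with a citation to Br\'egman's 1973 paper and used only in the upper-bound halves of Lemma \ref{lemma:3level} and Theorem \ref{thm:Boolean}. So there is no in-paper argument to compare yours against. Your proposal is a correct, complete, self-contained proof: it is the entropy-method proof due to Radhakrishnan. The three ingredients are all handled accurately: the chain rule applied in a uniformly random revelation order $\pi$ independent of $\sigma$ (so that $H(\sigma)=H(\sigma\mid\pi)$ and the reindexing from positions to rows is legitimate); the bound of conditional entropy by the expected logarithm of the support size, which applies here because the number $N_i$ of still-available neighbours of $u_i$ is determined by the conditioning data ($\pi$ and the previously revealed values); and, crucially, the distributional identity that for a \emph{fixed} matching $\sigma$ the variable $N_i$ is uniform on $\{1,\dots,d_i\}$ under the randomness of $\pi$ alone, since $N_i=d_i+1$ minus the rank of row $i$ within the $d_i$-element row set $R_i=\sigma^{-1}(N(u_i))$, and that rank is uniform. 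Your observation that $i\in R_i$ and that a neighbour is available exactly when its $\sigma$-partner does not strictly precede $i$ in $\pi$ is precisely what makes the rank computation exact rather than an inequality. Two degenerate cases are worth a sentence for completeness: if the two vertex classes have unequal sizes, or if some $d_i=0$, then there are no perfect matchings and the claim is vacuous, which is implicit in your reduction to $M\geq 1$. Compared with Br\'egman's original argument or Schrijver's convexity proof, your route is the standard modern one and arguably the cleanest; since the paper only cites the theorem, your write-up would serve as a valid appendix proof of it.
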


Our first technical lemma proves bounds on the number of symmetric chain decompositions of a rank-symmetric poset with three levels. We use a trick of Griggs \cite{Griggs} to identify the symmetric chain decompositions of such a poset with perfect matchings in a carefully constructed bipartite graph. Let us introduce some further notation.

A bipartite graph $G=(X,Y;E)$ is \emph{bi-regular} if every vertex in $X$ has the same degree, and every vertex in $Y$ has the same degree. Say that a graded poset $P$ is \emph{regular} if the bipartite comparability graph between any two consecutive levels is bi-regular. Clearly, $2^{[n]}$ is regular. Given a bipartite graph $G=(X,Y;E)$ with edge-weighting $w:E\rightarrow \mathbb{R}$, the \emph{bi-adjacency matrix} of $(G,w)$ is the $|X|\times |Y|$ matrix $A$, whose rows are indexed by elements of $X$, columns are indexed by elements of $Y$, and $A(x,y)=w(xy)$ if $xy$ is an edge, and $A(x,y)=0$ otherwise.

\begin{lemma}\label{lemma:3level}
Let $P$ be a regular, rank-symmetric poset with three levels $X,Y,Z$, $a=|X|=|Z|< |Y|=b$. Assume that every vertex in $X\cup Z$ is comparable to $r$ elements of $Y$. Then the number of symmetric chain decompositions of $P$ is between 
$$\left(\frac{r}{e}\right)^{2a}\cdot e^{-2(b-a)}\ \ \ \text{ and }\  \ \ \ (r!)^{(a+b)/r}.$$
\end{lemma}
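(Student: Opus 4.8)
The plan is to follow Griggs' trick to reduce the counting of symmetric chain decompositions to counting perfect matchings of a single auxiliary bipartite graph, and then to sandwich this number between the two bounds using Br\'egman's theorem (upper) and Falikman's theorem (lower).

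First I would analyze what a symmetric chain in $P$ looks like. Since $P$ has three levels with $m=2$, a symmetric chain is either a \emph{triple} $x\prec y\prec z$ with $x\in X$, $y\in Y$, $z\in Z$, or a \emph{singleton} $\{y\}$ with $y\in Y$. In any symmetric chain decomposition every element of $X$ and of $Z$ must lie in a triple, so a decomposition consists of exactly $a$ vertex-disjoint triples covering $X\cup Z$ together with $a$ middle vertices of $Y$, the remaining $b-a$ vertices of $Y$ being forced singletons. To encode such a configuration as a perfect matching, I would build a balanced bipartite graph $G$ on parts $X\cup Y''$ and $Z\cup Y'$, where $Y',Y''$ are two copies of $Y$: join $x\in X$ to $y'\in Y'$ whenever $x\prec y$, join $y''\in Y''$ to $z\in Z$ whenever $y\prec z$, and add the identity edges $y''y'$ for each $y\in Y$. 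A triple $x\prec y\prec z$ then corresponds to matching $x$ to $y'$ and $z$ to $y''$, while a singleton $y$ corresponds to the identity edge $y''y'$; one checks this is a bijection between symmetric chain decompositions and perfect matchings of $G$. Hence the number of decompositions equals $\perm(M)$, where $M$ is the bi-adjacency matrix of $G$, an $(a+b)\times(a+b)$ nonnegative matrix.

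For the upper bound I would apply Br\'egman's theorem (Theorem \ref{lemma:matching_upper}) to the vertex class $X\cup Y''$. Each $x\in X$ has degree $r$, while each $y''\in Y''$ has degree $c+1$, where $c=ar/b$ is the common number of elements of $X$ (equivalently $Z$) comparable to a fixed vertex of $Y$, the extra $1$ coming from the identity edge. The crucial observation is that bi-regularity forces $c<r$: counting edges between $X$ and $Y$ gives $ar=bc$, and since $a<b$ we get $c=ar/b<r$, so $c+1\le r$ because $c,r$ are integers. As $t\mapsto (t!)^{1/t}$ is increasing, every factor in Br\'egman's product is at most $(r!)^{1/r}$, and since $|X\cup Y''|=a+b$ the bound becomes $(r!)^{(a+b)/r}$, as required. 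For the lower bound I would exhibit an explicit doubly stochastic matrix supported on the support of $M$ and transfer Falikman's bound (Theorem \ref{lemma:perm}). Writing $M$ in block form with the two comparability blocks and the identity block $I_b$, I would scale both comparability blocks by $1/r$ and the identity block by $(b-a)/b$; using $ar=bc$ one verifies all row and column sums equal $1$, so the resulting matrix $D$ is doubly stochastic with $\operatorname{supp}(D)\subseteq\operatorname{supp}(M)$. The key point is that \emph{every} perfect matching of $G$ uses exactly $a$ edges from each comparability block and exactly $b-a$ identity edges, so the product of the corresponding entries of $D$ is the same constant $r^{-2a}\big((b-a)/b\big)^{b-a}$ for every matching. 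Expanding both permanents over perfect matchings therefore gives $\perm(D)=\perm(M)\cdot r^{-2a}\big((b-a)/b\big)^{b-a}$, whence $\perm(M)=\perm(D)\cdot r^{2a}\big(b/(b-a)\big)^{b-a}\ge e^{-(a+b)}\,r^{2a}$ by Falikman. Since $a\le b$ and $b/(b-a)\ge 1$, this is at least $r^{2a}e^{-2b}=(r/e)^{2a}e^{-2(b-a)}$, completing the lower bound.

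The main obstacle I anticipate is not any single inequality but getting the reduction and the scaling exactly right: one must design the gadget $G$ so that the forced singleton vertices of $Y$ are absorbed without disturbing the degrees too much (this is what the identity block and the resulting degree $c+1$ accomplish), and one must find a doubly stochastic matrix on its support whose entry-product is constant across \emph{all} perfect matchings, so that Falikman's bound transfers with exactly the right power of $r$. The elementary observation that bi-regularity together with $a<b$ yields $c<r$ is what makes both the clean Br\'egman bound and the degree bookkeeping go through.
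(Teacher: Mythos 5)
Your proposal is correct and follows essentially the same route as the paper: the same Griggs-style auxiliary bipartite graph with two copies of $Y$ and identity edges, the same doubly stochastic weighting (comparability edges scaled by $1/r$, identity edges by $1-\frac{a}{b}$) combined with Falikman's theorem for the lower bound, and the same application of Br\'egman's theorem with the observation $ar/b+1\leq r$ for the upper bound. The only (harmless) cosmetic difference is that you note the weight-product of every perfect matching is exactly the constant $r^{-2a}\bigl(\frac{b-a}{b}\bigr)^{b-a}$, whereas the paper only uses the one-sided bound $w(M)\leq r^{-2a}$; both yield the stated lower bound.
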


\begin{proof}
 Note that by the regularity of $P$, every vertex in $Y$ has exactly $r\cdot \frac{a}{b}$ neighbours in both $X$ and $Z$.  Let $Y_1$ and $Y_2$ be two disjoint copies of $Y$, and define the edge-weighted bipartite graph $(G,w)$ with vertex classes $U=Y_1\cup X$ and $V=Y_2\cup Z$ as follows. If $y\in Y$ and $x\in X$ are comparable, then add an edge of weight $1/r$ between $x$ and the copy of $y$ in $Y_2$. Similarly, if  $y\in Y$ and $z\in Z$ are comparable, then add an edge of weight $1/r$ between $z$ and the copy of $y$ in $Y_1$. Finally, if $y_1\in Y_1$ and $y_2\in Y_2$ are copies of the same vertex, then add an edge of weight $1-\frac{a}{b}$ between $y_1$ and $y_2$. Writing $A$ for the weighted bi-adjacency matrix of $(G,w)$, we have that $A$ is an $(a+b)\times (a+b)$ sized doubly stochastic matrix.

It is easy to see that there is a bijection between the perfect matchings of $G$ and the symmetric chain decompositions of $P$. Indeed, let $M$ be a perfect matching, then we define the chain decomposition as follows. Let $y\in Y$, and let $y_1\in Y_1$ and $y_2\in Y_2$ be the copies of $y$. If $\{y_1,y_2\}\in M$, then let $y$ form a single vertex chain. Otherwise, there exist $x\in X$ and $z\in Z$ such that $\{x,y_2\}\in M$ and $\{z,y_1\}\in M$, in which case $\{x,y,z\}$ is a chain. This clearly gives a symmetric chain decomposition of $P$. We can argue similarly that every symmetric chain decomposition comes from unique a perfect matching in this manner.

Now let us consider the lower bound. For a matching $M$, let $w(M)=\prod_{e\in M}w(e)$. If $M$ is a perfect matching, then it contains exactly $2a$ edges of weight $1/r$, and $b-a$ edges of weight $1-\frac{a}{b}$. Hence, $w(M)\leq r^{-2a}.$ Furthermore, by Lemma \ref{lemma:perm},
$$\sum_{M}w(M)=\perm(A)\geq e^{-(a+b)},$$
where the sum $\sum_{M}$ is taken over all perfect matchings of $G$. Hence, we conclude that the number of perfect matchings of $G$ is at least
$$r^{2a}e^{-(a+b)}=\left(\frac{r}{e}\right)^{2a}\cdot e^{-2(b-a)}.$$

Now let us turn to the upper bound. Write $q=ar/b+1$, so the vertices of $Y_1$ and $Y_2$ have degree $q$. Note that $q\leq r$ by our assumption that $a<b$. By Lemma \ref{lemma:matching_upper}, the number of perfect matchings of $G$ is at most 
$$(r!)^{a/r}\cdot (q!)^{b/q}\leq (r!)^{(a+b)/r}.$$
Here, we used that the function $(d!)^{1/d}$ is monotone increasing.
\end{proof}

\begin{proof}[Proof of Theorem \ref{thm:Boolean}]
For ease of notation, let us assume that $n$ is even, the case of odd $n$ follows in the same manner. Let $m=n/2$, and let $L_0,\dots,L_n$ be the levels of $2^{[n]}$. Every symmetric chain decomposition of $2^{[n]}$ can be generated as follows. For $s=1,\dots,m$, let $B_s$ be the subposet of $2^{[n]}$ induced by the levels $L_{m-s},\dots,L_{m+s}$. First, we find a symmetric chain decomposition $C_1$ of $B_1$. Then, step-by-step, we extend this to a symmetric chain decomposition  $C_s$ of $B_s$ as follows. The chain decomposition $C_{s-1}$ defines a bijection between the levels $L_{m-s+1}$ and $L_{m+s-1}$ by mapping the end of the chains of $C_{s-1}$ to each other. We identify the elements of $L_{m-s+1}$ and $L_{m+s-1}$ along this bijection. After this, we can view the subposet of $2^{[n]}$ induced on the levels $L_{m-s},L_{m-s+1},L_{m+s-1},L_{m+s}$ as a three level poset $P_s$. Then every symmetric chain decomposition $D_s$ of $P_{s}$ gives a symmetric chain decomposition $C_s$ of $B_{s}$, which extends $C_{s-1}$ in an obvious manner. 

For $s=1,\dots,m$, the poset $P_s$ is bi-regular, where every element not in the middle level has exactly $m+s$ neighbours in the middle level. Hence, it is enough to bound the number of symmetric chain decompositions of $P_s$. Let us start with the lower bound. By Lemma \ref{lemma:3level}, the number of different choices for $D_{s}$ is at least
$$\left(\frac{m+s}{e}\right)^{2|L_{m+s}|}e^{-2(|L_{m+s}|-|L_{m+s-1}|)}\geq \left(\frac{n}{2e}\right)^{2|L_{m+s}|}e^{-2(|L_{m+s}|-|L_{m+s-1}|)}.$$
 Therefore, the number of symmetric chain decompositions is at least
$$\prod_{s=1}^{m}\left(\frac{n}{2e}\right)^{2|L_{m+s}|}e^{-2(|L_{m+s}|-|L_{m+s-1}|)}\geq \left(\frac{n}{2e}\right)^{2^n-|L_m|}\cdot e^{-2|L_m|}=\left(\frac{n}{2e}\right)^{2^n\cdot (1-O(n^{-1/2}))}.$$
Here, we used that $|L_m|=\binom{n}{n/2}=O(2^n/\sqrt{n})$.

Now let us turn to the upper bound. By  Lemma \ref{lemma:3level}, the number of different choices for $D_{s}$ is at most
$$((m+s)!)^{(|L_{m+s-1}|+|L_{m+s}|)/(m+s)}\leq \left((1+o(1))\frac{m+s}{e}\right)^{|L_{m+s-1}|+|L_{m+s}|}.$$
As long as $s\leq n^{2/3}$, the right hand side is at most
$$\left((1+o(1))\frac{n}{2e}\right)^{|L_{m+s-1}|+|L_{m+s}|}.$$
Hence, the total number symmetric chain decompositions is at most 
$$\prod_{s=1}^{n^{2/3}}\left((1+o(1))\frac{n}{2e}\right)^{|L_{m+s-1}|+|L_{m+s}|} \prod_{s=n^{2/3}}^m \left((1+o(1))\frac{n}{e}\right)^{|L_{m+s-1}|+|L_{m+s}|}=\left(\left(\frac{1}{2e}+o(1)\right)n\right)^{2^n}.$$
Here, we used that $\sum_{s=n^{2/3}}^{m}|L_{m+s-1}|+|L_{m+s}|=O(2^n n^{-10})$ by standard concentration arguments.
\end{proof}

\section{The hypergrid}\label{sect:2}

We now turn to the proof of Theorem \ref{thm:hypergrid}. We follow similar ideas as in the previous section. Unfortunately, Lemma \ref{lemma:3level} is no longer applicable as $[t]^n$ is not regular. In order to overcome this, we introduce a weighting of the edges of the cover graph of $[t]^n$, which makes it regular in a certain weighted sense. In order to do this, we need to introduce some notation.

Given a bipartite graph $G=(X,Y;E)$, a function $f:E\rightarrow \mathbb{R}_{\geq 0}$ is a \emph{normalized matching} if there exists $a,b\in \mathbb{R}$ such that for  every $x\in X$, $$\sum_{y\in Y: xy\in E} f(xy)=a$$ and for every $y\in Y$, $$\sum_{x\in X: xy\in E} f(xy)=b.$$
Note that $a$ and $b$ must satisfy $a/b=|Y|/|X|$. The function $f$ is a \emph{scaled normalized matching} if $a=1$ (assuming the order of $X$ and $Y$ is given), which then implies $b=|X|/|Y|$. Given a graded poset $P$ with cover graph $G$, a function $f:E(G)\rightarrow \mathbb{R}_{\geq 0}$ is a \emph{normalized matching flow} if the restriction of $f$ to the bipartite graph between consecutive levels of $P$ is a normalized matching, and $f$ is a \emph{scaled normalized matching flow} (or SNMF, for short), if this restriction is a scaled normalized matching, with the smaller indexed level playing the role of $X$.

Normalized matching flows were introduced by Kleitman \cite{kleitman74}, who proved that their existence is equivalent to other important properties of posets, such as the \emph{LYM property}, and the \emph{normalized matching property}. We refer the interested reader to \cite{book_Anderson} or \cite{kleitman74}. It was proved by Harper \cite{harper74} that $[t]^n$ has a normalized matching flow (and thus all three of the aforementioned properties). However, it was only recently proved by Falgas-Ravry, R\"aty and Tomon \cite{FRT} that one can also find a normalized matching flow in which the weights are somewhat evenly distributed. This additional property is the key to our argument.

\begin{lemma}[Lemma 4.7 in \cite{FRT}]\label{lemma:regular_flow}
There exists an SNMF $f$ on the cover graph of $[t]^n$ such that for every edge $xy$, where $x\prec y$ and $|r(x)-\frac{(t-1)n}{2}|\leq tn^{3/5}$, we have
$$f(xy)=O\left(\frac{\log n}{n}\right).$$
\end{lemma}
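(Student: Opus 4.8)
The plan is to build $f$ one layer at a time. Since the SNMF condition is imposed \emph{separately} on each bipartite graph between consecutive levels, the weights on different layers may be chosen independently. For a layer $(L_k,L_{k+1})$ far from the middle I would simply invoke Harper's theorem \cite{harper74}, which guarantees that $[t]^n$ has the normalized matching property and hence that a scaled normalized matching exists on this layer; there I make no attempt to control the individual weights. All the work goes into the \emph{middle layers}, namely those $k$ with $|k-\tfrac{(t-1)n}{2}|\le tn^{3/5}$, for which I must produce a scaled normalized matching whose every weight is $O(\log n/n)$. These are exactly the layers carrying the edges for which the lemma demands a bound, so pasting the two kinds of layers together yields the desired $f$.

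Fix such a middle layer, write $G=(L_k,L_{k+1})$ for the bipartite cover graph and $\rho_k=|L_k|/|L_{k+1}|$ for the target column sum. The first step is to record two quantitative features of $G$. First, every middle vertex has large degree: if $r(x)=k$ lies in the window, then $r(x)\ge (t-1)\cdot\#\{i:x_i=t\}$, so the number of coordinates equal to $t$ is at most $\tfrac{r(x)}{t-1}\le \tfrac n2+2n^{3/5}$, whence the up-degree satisfies $D^{+}(x)\ge \tfrac n2-2n^{3/5}=\Omega(n)$; applying the rank-reversing involution $x_i\mapsto t+1-x_i$ gives $D^{-}(y)\ge \tfrac n2-2n^{3/5}=\Omega(n)$ for every $y\in L_{k+1}$ in the window. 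Second, by the local central limit theorem for a sum of $n$ i.i.d.\ uniform variables on $[t]$, the level sizes vary slowly across the window, so $\rho_k=1+o(1)$.

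With these in hand I would reformulate the existence of a bounded-weight scaled normalized matching as a capacitated transportation problem: seek $f\colon E(G)\to[0,c_0]$ with $c_0=C\log n/n$, all row sums equal to $1$ and all column sums equal to $\rho_k$ (the two total masses agree since $\rho_k|L_{k+1}|=|L_k|$). By the max-flow--min-cut (Gale--Hoffman) criterion for transportation, such an $f$ exists if and only if
\[
|S|\le \sum_{y\in L_{k+1}}\min\bigl(\rho_k,\;c_0\,d_S(y)\bigr)\qquad\text{for every }S\subseteq L_k,
\]
together with the symmetric inequality obtained by exchanging the two sides, where $d_S(y)$ is the number of down-neighbours of $y$ in $S$. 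Here $c_0\sum_{y}d_S(y)=c_0\sum_{x\in S}D^{+}(x)\ge \Omega(\log n)\,|S|$ sits comfortably above the required $|S|$, so the inequality can fail only because the truncation at $\rho_k$ discards the surplus on the ``popular'' vertices $y$ with $d_S(y)\gtrsim \rho_k/c_0\approx n/\log n$. Ruling this out is exactly a \emph{spread} (bipartite expansion) property: no set $S\subseteq L_k$ may route a constant fraction of its outgoing edges into a set of upper vertices much smaller than $S$.

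Establishing this spread estimate is the main obstacle, and it is where the window $|k-\tfrac{(t-1)n}{2}|\le tn^{3/5}$ and the $\log n$ slack are spent. I would prove it by an isoperimetric argument for the hypergrid cover graph near the middle, using concentration of measure: within the window the coordinate profile of a uniformly random middle vertex is tightly concentrated, every degree is uniformly $\Omega(n)$, and the number of common down-neighbours of two upper vertices is controlled, which together forbid any small set from capturing too many edges. The factor $\log n$ (rather than a clean $1/n$) enters precisely here, as the congestion/union-bound slack needed to absorb the lower-order fluctuations of the degrees and level sizes and to route the $\Omega(1)$-sized column corrections that are forced at the atypical middle vertices (those with an $\Omega(n)$ imbalance between coordinates equal to $1$ and to $t$, which do occur within the window). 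Once the spread estimate is in place, the min-cut inequality holds for every $S$, the transportation problem is feasible on each middle layer, and pasting these matchings with the Harper matchings on the remaining layers produces the required SNMF $f$.
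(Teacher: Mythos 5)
This lemma is not proved in the paper at all: it is imported verbatim from \cite{FRT} (Lemma 4.7 there), so there is no in-paper argument to compare against, and your proposal has to be judged on whether it would constitute a self-contained proof. It would not. Your framework is sound as far as it goes: the observation that the SNMF constraints decouple layer by layer is correct, the degree bound $D^{+}(x)\ge n/2-2n^{3/5}$ inside the window and the estimate $\rho_k=1+o(1)$ are both right, and the reformulation of ``scaled normalized matching with all weights at most $c_0$'' as a capacitated transportation problem, with feasibility governed by the Gale--Hoffman cut condition, is a legitimate reduction. But after this reduction you have only restated the lemma in equivalent form. The entire mathematical content --- ruling out, for \emph{every} set $S\subseteq L_k$, the scenario in which a constant fraction of the roughly $|S|\cdot n/2$ cover edges leaving $S$ lands on a set of ``popular'' vertices of size comparable to $|S|$, each receiving $\gtrsim n/\log n$ of those edges --- is asserted, not proved. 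Invoking ``an isoperimetric argument,'' ``concentration of measure,'' and ``controlled codegrees'' does not close this: concentration statements control the behaviour of typical or random vertices and sets, whereas the cut condition must be verified for \emph{worst-case} $S$, including highly structured sets (shadows, subgrids, unions of such), and it is exactly this worst-case expansion statement for hypergrid levels that requires the real work in \cite{FRT}. The remark about ``$\Omega(1)$-sized column corrections at atypical middle vertices'' likewise does not correspond to any concrete, checkable step.

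Put differently: your proposal correctly identifies where the difficulty lives (you even say it is ``the main obstacle''), quantifies how much slack the $\log n$ factor buys, and explains why the window hypothesis is needed for the degree bounds --- all useful framing --- but the spread/expansion estimate you defer is not a routine verification; it is the theorem. Until you supply a proof of the cut inequality valid for all $S$ (for instance via a normalized-matching-type or Kruskal--Katona-type inequality with quantitative control on edge multiplicities, which is the kind of machinery \cite{FRT} develops), the argument is a reduction of the lemma to an unproved statement of essentially equal strength.
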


In what follows, we prove a variant of the lower bound of Lemma \ref{lemma:3level}.

\begin{lemma}\label{lemma:3level+}
Let $P$ be a symmetric poset with three levels $X,Y,Z$, $a=|X|=|Z|< |Y|=b$, let $f$ be an SNMF on $P$, and let $W$ be the maximum of $f(xy)$ among all edges $xy$ of the cover graph. Then the number of symmetric chain decompositions of $P$ is at least
$$W^{-2a} e^{-(b+a)}.$$
\end{lemma}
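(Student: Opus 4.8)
The plan is to mimic the lower-bound half of the proof of Lemma~\ref{lemma:3level}, but replace the uniform weight $1/r$ by the SNMF values $f(xy)$, which are no longer constant. First I would reproduce the Griggs-style bijection between symmetric chain decompositions of $P$ and perfect matchings of an auxiliary weighted bipartite graph $(G,w)$. As before, take two disjoint copies $Y_1,Y_2$ of the middle level $Y$, set $U=Y_1\cup X$ and $V=Y_2\cup Z$, and put an edge of weight $f(xy)$ between $x\in X$ and the copy of $y$ in $Y_2$ whenever $x\prec y$, an edge of weight $f(zy)$ between $z\in Z$ and the copy of $y$ in $Y_1$ whenever $y\prec z$, and an edge between the two copies $y_1,y_2$ of each $y\in Y$ of weight $1-\sum_{x\prec y} f(xy)$. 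The identical combinatorial argument shows perfect matchings of $G$ correspond bijectively to symmetric chain decompositions of $P$.

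The key point is that the resulting weighted bi-adjacency matrix $A$ is doubly stochastic. For a column indexed by $y_2\in Y_2$, the weights of edges to $X$ sum to $\sum_{x\prec y} f(xy)$, which by the scaled normalized matching condition (with $X$ the smaller-indexed level, so $a=1$) equals $1$ after I account for the diagonal copy-edge; more precisely, the copy-edge weight $1-\sum_{x\prec y}f(xy)$ is chosen exactly so that each row and column sums to $1$. I would verify nonnegativity of the copy-edge weights, which follows because $\sum_{x\prec y} f(xy)=b/a<1$ is the per-vertex flow into $Y$ from below — here I must check the SNMF normalization carefully, since the flow is scaled so that lower vertices send total weight $1$, and each $y\in Y$ receives total weight $a/b\le 1$, leaving a nonnegative slack on the copy-edge. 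This normalization bookkeeping is the main obstacle: I must confirm the orientation conventions of the SNMF (which level plays the role of $X$) yield a genuinely doubly stochastic matrix with all entries in $[0,1]$.

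With $A$ doubly stochastic, I apply Theorem~\ref{lemma:perm} (Falikman) to get $\sum_M w(M)=\perm(A)\ge e^{-(a+b)}$, where $w(M)=\prod_{e\in M} w(e)$ and the sum is over all perfect matchings. Now I bound $w(M)$ from above for each individual matching. A perfect matching uses exactly $2a$ edges incident to $X\cup Z$ (each of weight $f(xy)\le W$) and $b-a$ copy-edges of weight at most $1$. Hence $w(M)\le W^{2a}$, so
$$
e^{-(a+b)}\le \perm(A)=\sum_M w(M)\le W^{2a}\cdot \#\{\text{perfect matchings}\}.
$$
Rearranging gives that the number of perfect matchings, and therefore the number of symmetric chain decompositions of $P$, is at least $W^{-2a}e^{-(a+b)}$, as claimed. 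The argument is a direct weighted generalization of the earlier lower bound; the only genuinely new ingredient is substituting the nonuniform flow $f$ for the constant $1/r$ and tracking that the doubly stochastic structure survives, after which the permanent bound and the crude per-matching weight estimate close the proof.
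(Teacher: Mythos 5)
Your overall strategy --- the weighted Griggs bijection, Falikman's theorem (Theorem \ref{lemma:perm}) applied to the weighted bi-adjacency matrix, and the per-matching bound $w(M)\le W^{2a}$ --- is exactly the paper's, but the weighting you specify does not work, and the point where it fails is precisely the ``normalization bookkeeping'' you flagged and then left unresolved. Recall the SNMF convention: between consecutive levels the \emph{lower} level plays the role of $X$, so $\sum_{y\succ x}f(xy)=1$ for $x\in X$ and $\sum_{x\prec y}f(xy)=a/b$ for $y\in Y$, while $\sum_{z\succ y}f(yz)=1$ for $y\in Y$ and $\sum_{y\prec z}f(yz)=b/a$ for $z\in Z$. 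With your weights (the raw value $f(yz)$ on the $Z$--$Y_1$ edges, and $1-\sum_{x\prec y}f(xy)=1-a/b$ on the copy-edges), the row of a vertex $y_1\in Y_1$ sums to $(1-a/b)+\sum_{z\succ y}f(yz)=2-a/b>1$, and the column of a vertex $z\in Z$ sums to $\sum_{y\prec z}f(yz)=b/a>1$. So your matrix $A$ is not doubly stochastic, and Theorem \ref{lemma:perm} cannot be applied. (Your parenthetical claim that $\sum_{x\prec y}f(xy)=b/a<1$ is also wrong on both counts: that sum equals $a/b$.)

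The fix is the one genuinely new ingredient of the paper's proof: put weight $\frac{a}{b}\cdot f(yz)$, not $f(yz)$, on the edge between $z\in Z$ and the copy of $y$ in $Y_1$, keeping weight $f(xy)$ on the $X$--$Y_2$ edges and $1-\frac{a}{b}$ on the copy-edges. Then every row and column sums to exactly $1$: the row of $y_1$ gives $\left(1-\frac{a}{b}\right)+\frac{a}{b}\sum_{z\succ y}f(yz)=1$, and the column of $z$ gives $\frac{a}{b}\cdot\frac{b}{a}=1$. With this correction the rest of your argument goes through verbatim: the rescaled edge weights still satisfy $\frac{a}{b}f(yz)\le f(yz)\le W$, so $w(M)\le W^{2a}$ for every perfect matching, and Falikman's bound $\perm(A)\ge e^{-(a+b)}$ yields the claimed count $W^{-2a}e^{-(a+b)}$.
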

\begin{proof}
 Let $Y_1$ and $Y_2$ be two disjoint copies of $Y$, and define the edge-weighted bipartite graph $(G,w)$ with vertex classes $U=Y_1\cup X$ and $V=Y_2\cup Z$ as follows. If $y\in Y$ and $x\in X$ are comparable, then add an edge of weight $f(xy)$ between $x$ and the copy of $y$ in $Y_2$. Similarly, if  $y\in Y$ and $z\in Z$ are comparable, then add an edge of weight $\frac{a}{b}\cdot f(yz)$ between $z$ and the copy of $y$ in $Y_1$. Finally, if $y_1\in Y_1$ and $y_2\in Y_2$ are copies of the same vertex, then add an edge of weight $1-\frac{a}{b}$ between $y_1$ and $y_2$. This weighting has the property that the sum of weights at every vertex is equal to 1, using that $f$ is an SNMF. Hence, writing $A$ for the weighted bi-adjacency matrix of $(G,w)$, we have that $A$ is an $(a+b)\times (a+b)$ sized doubly stochastic matrix. 

 As in the proof of Lemma \ref{lemma:3level}, we observe that the number of perfect matchings of $G$ is equal to the number of symmetric chain decompositions of $P$. For a matching $M$, let $w(M)=\prod_{e\in M}w(e)$. If $M$ is a perfect matching, then it contains $2a$ edges of weight at most $W$, and $b-a$ further edges of weight $1-\frac{a}{b}$. Hence, $w(M)\leq W^{2a}.$
 Furthermore, by Lemma \ref{lemma:perm},
$$\sum_{M}w(M)=\perm(A)\geq e^{-(a+b)},$$
where the sum $\sum_{M}$ is taken over all perfect matchings of $G$. Thus, we conclude that the number of perfect matchings of $G$ is at least
$W^{-2a}e^{-(a+b)}.$
\end{proof}

\begin{proof}[Proof of Theorem \ref{thm:hypergrid}]
For ease of notation, let us assume that $(t-1)n$ is even, the case of odd $(t-1)n$ follows in the same manner. Let $m=(t-1)n/2$, let $L_0,\dots,L_{(t-1)n}$ be the levels of $[t]^{n}$, and let $f$ be an SNMF of $[t]^n$ satisfying the properties of Lemma \ref{lemma:regular_flow}. That is, writing $W$ for the maximum of $f(xy)$ over all edges $xy$ of the cover graph, where $x\prec y$ and $|r(x)-\frac{(t-1)n}{2}|\leq tn^{3/5}$, we have $W=O(\frac{\log n}{n})=n^{-1+o_n(1)}$.

Repeating the proof of Theorem \ref{thm:Boolean}, our goal is to find a lower bound on the number of symmetric chain decompositions of the 3 level poset $P_s$, defined as follows. The middle level $Y$ of $P_s$ is a gluing of the elements of $L_{m-s+1}$ and $L_{m+s-1}$ along some bijection, while the lowest and largest levels are $L_{m-s}$ and $L_{m+s}$, respectively. Clearly, the restriction of $f$ to $P_s$ is also an SNMF. Also, if $s\leq tn^{3/5}$, then $f(xy)\leq W$ for every edge $xy$ of the cover graph of $P_s$. Hence, by Lemma \ref{lemma:3level+}, $P_s$ has at least $W^{-2|L_{m+s}|}e^{-|L_{m+s-1}|+|L_{m+s}|}$ symmetric chain decompositions. If $s>tn^{3/5}$, we only use the fact that $P_s$ has at least 1 symmetric chain decomposition (which is guaranteed by the existence of an SNMF). Thus, we conclude that the number of symmetric chain decompositions of $[t]^n$ is at least
$$\prod_{s=1}^{tn^{3/5}}W^{-2|L_{m+s}|}e^{-(|L_{m+s-1}|+|L_{m+s}|)}<W^{-(1-o_n(1))t^n} e^{-t^n}=n^{(1-o_n(1))t^n}.$$
Here, we used the facts that $|L_m|=O(t^{n-1}/\sqrt{n})$ (see e.g. \cite{book_Anderson}) and $\sum_{s\geq tn^{3/5}}|L_{m+s}|=o_n(t^n)$ (see e.g. \cite{FRT}).
\end{proof}

\noindent
\textbf{Acknowledgments.} We would like to thank Adam Zsolt Wagner for valuable discussions.

\end{document}